\numberwithin{equation}{section}
\numberwithin{figure}{section}
\theoremstyle{plain}
\newtheorem{thm}{\protect\theoremname}[section]
\theoremstyle{plain}
\newtheorem{conjecture}[thm]{\protect\conjecturename}
\theoremstyle{remark}
\newtheorem{rem}[thm]{\protect\remarkname}
\newenvironment{proof}[1][\protect\proofname]{\par
\normalfont\topsep6\p@\@plus6\p@\relax
\trivlist
\itemindent\parindent
\item[\hskip\labelsep\scshape #1]\ignorespaces
}{%
\endtrivlist\@endpefalse
}
\providecommand{\proofname}{Proof}
\theoremstyle{plain}
\newtheorem{lem}[thm]{\protect\lemmaname}
\theoremstyle{plain}
\newtheorem{cor}[thm]{\protect\corollaryname}
\theoremstyle{remark}
\newtheorem{claim}[thm]{\protect\claimname}
 \newlist{casenv}{enumerate}{4}
 \setlist[casenv]{leftmargin=*,align=left,widest={iiii}}
 \setlist[casenv,1]{label={{\itshape\ \casename} \arabic*.},ref=\arabic*}
 \setlist[casenv,2]{label={{\itshape\ \casename} \roman*.},ref=\roman*}
 \setlist[casenv,3]{label={{\itshape\ \casename\ \alph*.}},ref=\alph*}
 \setlist[casenv,4]{label={{\itshape\ \casename} \arabic*.},ref=\arabic*}
\theoremstyle{plain}
\newtheorem{prop}[thm]{\protect\propositionname}
 \providecommand{\casename}{Case}
\providecommand{\claimname}{Claim}
\providecommand{\conjecturename}{Conjecture}
\providecommand{\corollaryname}{Corollary}
\providecommand{\lemmaname}{Lemma}
\providecommand{\propositionname}{Proposition}
\providecommand{\remarkname}{Remark}
\providecommand{\theoremname}{Theorem}
\begin{document}
\global\long\def\p{\mathbf{p}}
\global\long\def\q{\mathbf{q}}
\global\long\def\C{\mathfrak{C}}
\global\long\def\SS{\mathcal{P}}
 \global\long\def\image{\operatorname{im}}
\global\long\def\otp{\operatorname{otp}}
\global\long\def\dec{\operatorname{dec}}
\global\long\def\suc{\operatorname{suc}}
\global\long\def\pre{\operatorname{pre}}
\global\long\def\qe{\operatorname{qf}}
 \global\long\def\ind{\operatorname{ind}}
\global\long\def\Nind{\operatorname{Nind}}
\global\long\def\lev{\operatorname{lev}}
\global\long\def\Suc{\operatorname{Suc}}
\global\long\def\HNind{\operatorname{HNind}}
\global\long\def\minb{{\lim}}
\global\long\def\concat{\frown}
\global\long\def\cl{\operatorname{cl}}
\global\long\def\tp{\operatorname{tp}}
\global\long\def\id{\operatorname{id}}
\global\long\def\cons{\left(\star\right)}
\global\long\def\qf{\operatorname{qf}}
\global\long\def\ai{\operatorname{ai}}
\global\long\def\dtp{\operatorname{dtp}}
\global\long\def\acl{\operatorname{acl}}
\global\long\def\nb{\operatorname{nb}}
\global\long\def\limb{{\lim}}
\global\long\def\leftexp#1#2{{\vphantom{#2}}^{#1}{#2}}
\global\long\def\intr{\operatorname{interval}}
\global\long\def\atom{\emph{at}}
\global\long\def\I{\mathfrak{I}}
\global\long\def\uf{\operatorname{uf}}
\global\long\def\ded{\operatorname{ded}}
\global\long\def\Ded{\operatorname{Ded}}
\global\long\def\Df{\operatorname{Df}}
\global\long\def\Th{\operatorname{Th}}
\global\long\def\eq{\operatorname{eq}}
\global\long\def\Aut{\operatorname{Aut}}
\global\long\def\ac{ac}
\global\long\def\DfOne{\operatorname{df}_{\operatorname{iso}}}
\global\long\def\modp#1{\pmod#1}
\global\long\def\sequence#1#2{\left\langle #1\,\middle|\,#2\right\rangle }
\global\long\def\set#1#2{\left\{  #1\,\middle|\,#2\right\}  }
\global\long\def\Diag{\operatorname{Diag}}
\global\long\def\Nn{\mathbb{N}}
\global\long\def\mathrela#1{\mathrel{#1}}
\global\long\def\twiddle{\mathord{\sim}}
\global\long\def\mathordi#1{\mathord{#1}}
\global\long\def\Qq{\mathbb{Q}}
\global\long\def\dense{\operatorname{dense}}
 \global\long\def\cof{\operatorname{cof}}
\global\long\def\tr{\operatorname{tr}}
\global\long\def\treeexp#1#2{#1^{\left\langle #2\right\rangle _{\tr}}}
\global\long\def\x{\times}
\global\long\def\forces{\Vdash}
\global\long\def\Vv{\mathbb{V}}
\global\long\def\Zz{\mathbb{Z}}
\global\long\def\tauname{\dot{\tau}}
\global\long\def\ScottPsi{\Psi}
\global\long\def\cont{2^{\aleph_{0}}}
\global\long\def\MA#1{{MA}_{#1}}
\global\long\def\rank#1#2{R_{#1}\left(#2\right)}
\global\long\def\cal#1{\mathcal{#1}}
\global\long\def\Ff{\mathbb{F}}

\def\Ind#1#2{#1\setbox0=\hbox{$#1x$}\kern\wd0\hbox to 0pt{\hss$#1\mid$\hss} \lower.9\ht0\hbox to 0pt{\hss$#1\smile$\hss}\kern\wd0} 
\def\Notind#1#2{#1\setbox0=\hbox{$#1x$}\kern\wd0\hbox to 0pt{\mathchardef \nn="3236\hss$#1\nn$\kern1.4\wd0\hss}\hbox to 0pt{\hss$#1\mid$\hss}\lower.9\ht0 \hbox to 0pt{\hss$#1\smile$\hss}\kern\wd0} 
\def\nind{\mathop{\mathpalette\Notind{}}}

\global\long\def\ind{\mathop{\mathpalette\Ind{}}}
\global\long\def\opp{\operatorname{opp}}
 \global\long\def\nind{\mathop{\mathpalette\Notind{}}}
\global\long\def\average#1#2#3{Av_{#3}\left(#1/#2\right)}
\global\long\def\mx#1{Mx_{#1}}
\global\long\def\maps{\mathfrak{L}}

\global\long\def\Esat{E_{\mbox{sat}}}
\global\long\def\Ebnf{E_{\mbox{rep}}}
\global\long\def\Ecom{E_{\mbox{com}}}
\global\long\def\BtypesA{S_{\Bb}^{x}\left(A\right)}

\global\long\def\init{\trianglelefteq}
\global\long\def\fini{\trianglerighteq}
\global\long\def\Bb{\cal B}
\global\long\def\Lim{\operatorname{Lim}}
\global\long\def\Succ{\operatorname{Succ}}

\global\long\def\SquareClass{\cal M}
\global\long\def\leqstar{\leq_{*}}
\global\long\def\average#1#2#3{Av_{#3}\left(#1/#2\right)}
\global\long\def\cut#1{\mathfrak{#1}}
\global\long\def\Prime{Pr}
\global\long\def\TheoryofPrimes{T_{+,\Prime}}
\global\long\def\TheoryofPrimesOrder{T_{+,\Prime,<}}

\global\long\def\OurSequence{\mathcal{I}}

\title{Decidability and classification of the theory of integers with primes}

\author{Itay Kaplan and Saharon Shelah}

\thanks{The first author would like to thank the Israel Science foundation
for partial support of this research (Grant no. 1533/14). }

\thanks{The research leading to these results has received funding from the
European Research Council, ERC Grant Agreement n. 338821. No. 1082
on the second author's list of publications.}

\address{Itay Kaplan \\
The Hebrew University of Jerusalem\\
Einstein Institute of Mathematics \\
Edmond J. Safra Campus, Givat Ram\\
Jerusalem 91904, Israel}

\email{kaplan@math.huji.ac.il}

\urladdr{https://sites.google.com/site/itay80/ }

\address{Saharon Shelah\\
The Hebrew University of Jerusalem\\
Einstein Institute of Mathematics \\
Edmond J. Safra Campus, Givat Ram\\
Jerusalem 91904, Israel}

\address{Saharon Shelah \\
Department of Mathematics\\
Hill Center-Busch Campus\\
Rutgers, The State University of New Jersey\\
110 Frelinghuysen Road\\
Piscataway, NJ 08854-8019 USA}

\email{shelah@math.huji.ac.il}

\urladdr{http://shelah.logic.at/}

\subjclass[2010]{03C45, 03F30, 03B25, 11A41}

\keywords{Model theory, Decidability, Primes, Dickson's conjecture}
\begin{abstract}
We show that under Dickson's conjecture about the distribution of
primes in the natural numbers, the theory $Th\left(\Zz,+,1,0,\Prime\right)$
where $\Prime$ is a predicate for the prime numbers and their negations
is decidable, unstable and supersimple. This is in contrast with $Th\left(\Zz,+,0,\Prime,<\right)$
which is known to be undecidable by the works of Jockusch, Bateman
and Woods. 
\end{abstract}

\maketitle

\section{Introduction}

It is well known that Presburger arithmetic $T_{+,<}=Th\left(\Zz,+,0,1,<\right)$
is decidable and enjoys quantifier elimination after introducing predicates
for divisibility by $n$ for every natural number $n>1$ (see e.g.,
\cite[Corollary 3.1.21]{Marker}). The same is true for $T_{+}=Th\left(\Zz,+,0,1\right)$.
This is, of course, in contrast to the situation with the theory of
Peano arithmetics or $Th\left(\Zz,+,\cdot,0,1\right)$ which is not
decidable. 

If we are interested in classifying these theories in terms of stability
theory, quantifier elimination gives us that $T_{+}$ is superstable
of $U$-rank $1$, while $T_{+,<}$ is dp-minimal (a subclass of dependent,
or NIP, theories, see e.g., \cite{MR2822489,Simon-Dp-min,OnUs1}). 

Over the years there has been quite extensive research on structures
with universe $\Zz$ or $\Nn$ and some extra structure, usually definable
from Peano. A very good survey regarding questions of decidability
is \cite{MR1900397} and a list of such structures defining addition
and multiplication is available in \cite{MR1824851}. 

Less research was done on classifying these structures stability-theoretically.
For instance, in \cite[Theorem 25]{Pois45} and also in \cite{PalacinSklinos2014}
it is proved that $Th\left(\Zz,+,0,P_{q}\right)$ is superstable of
$U$-rank $\omega$, where $P_{q}$ is the set of powers of $q$. 

In this paper we are interested in adding a predicate $\Prime$ for
the primes and their negations and we consider $\TheoryofPrimes=Th\left(\Zz,+,0,1,\Prime\right)$
and $\TheoryofPrimesOrder=Th\left(\Zz,+,0,1,\Prime,<\right)$. The
language $\left\{ +,0,1,\Prime\right\} $ allows us to express famous
number-theoretic conjectures such as the twin prime conjecture (for
every $n$, there are at least $n$ pairs of primes/negation of primes
of distance $2$), and a version of Goldbach's conjecture (all even
integers can be expressed as a difference or a sum of primes). Adding
the order allows us to express Goldbach's conjecture in full. 

Up to now, the only known results about the theory are under a strong
number-theoretic conjecture known as Dickson conjecture (D) (see below),
which is also the assumption in the works of Jockusch, Bateman and
Woods. In \cite{Batemanetc,WoodsThesis}, they proved that assuming
Dickson conjecture, $Th\left(\Nn,+,0,\Prime\right)$ is undecidable
and even defines multiplication. It follows immediately that $\TheoryofPrimesOrder$
is undecidable and as complicated as possible in the sense of stability
theory. This also explains why we need $\Prime$ to include also the
negation of primes: by relatives of the Goldbach Conjecture (which
are proved, see e.g., \cite{Tao2013}), every positive integer greater
than $N$ is a sum of $K$ primes for some fixed $K,N$, and hence
the positive integers themselves are also definable from the positive
primes. 
\begin{conjecture}
[\bf{D}] (Dickson, 1904 \cite{dickson}) Let $k\geq1$ and $\bar{f}=\sequence{f_{i}}{i<k}$
where $f_{i}\left(x\right)=a_{i}x+b_{i}$ with $a_{i},b_{i}$ non-negative
integers, $a_{i}\geq1$ for all $i<k$. Assume that the following
condition holds:
\begin{itemize}
\item [$\star_{\bar{f}}$]There does not exist any integer $n>1$ dividing
all the products $\prod_{i<k}f_{i}\left(s\right)$ for every (non-negative)
integer $s$. 
\end{itemize}
Then there exist infinitely many natural numbers $m$ such that $f_{i}\left(m\right)$
is prime for all $i<k$. 
\end{conjecture}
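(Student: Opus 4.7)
Dickson's conjecture is a well-known open problem in analytic number theory, and in this paper it is taken as a hypothesis rather than a theorem to be proven; so no serious proof can be sketched. What follows is a summary of what is known and where the obstruction lies.

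The plan for $k = 1$ is entirely classical. The hypothesis $\star_{\bar{f}}$ reduces in that case to $\gcd(a_0, b_0) = 1$, and the conclusion is Dirichlet's theorem on primes in arithmetic progressions. One would follow Dirichlet's original argument: introduce Dirichlet characters modulo $a_0$, form the $L$-functions $L(s, \chi) = \sum_{n \ge 1} \chi(n) n^{-s}$, and deduce the infinitude of primes $\equiv b_0 \pmod{a_0}$ from the nonvanishing of $L(1, \chi)$ for nontrivial $\chi$. This is essentially the only case of the conjecture for which a rigorous proof is known.

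For $k \ge 2$ the statement is the qualitative form of the prime $k$-tuples conjecture of Hardy and Littlewood, and it is completely open. Even the instance $\bar{f} = (x, x+2)$ is the twin prime conjecture. A plausible plan of attack would be the circle method in the Hardy--Littlewood style: express the count of $m \le N$ for which all $f_i(m)$ are prime as an integral of a product of exponential sums over primes, decompose the unit interval into major and minor arcs, and evaluate the major-arc contribution using the Siegel--Walfisz theorem. The predicted asymptotic $c(\bar{f}) \cdot N / (\log N)^k$ would drop out, with the singular series $c(\bar{f})$ positive exactly when $\star_{\bar{f}}$ holds. The partial evidence from sieve methods (Brun's upper bound of the correct order of magnitude, and the Maynard--Tao bounded-gaps theorem producing infinitely many $m$ for which a positive proportion of $f_0(m), \ldots, f_{k-1}(m)$ are simultaneously prime) is fully consistent with this picture.

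The hard part, however, is Selberg's parity problem: linear sieves cannot distinguish between integers with an even versus odd number of prime factors, so they cannot force all coordinates of a $k$-tuple to be prime for $k \ge 2$. Equivalently, the minor-arc estimates required for the circle method here would demand cancellation in exponential sums over primes far beyond what is currently accessible. Overcoming this obstruction is not a matter of carrying out a calculation but reflects a genuine limitation of present-day analytic number theory; it is precisely because of this that the authors of the present paper assume Dickson's conjecture rather than attempting to prove it.
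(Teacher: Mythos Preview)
Your assessment is correct and matches the paper's treatment: this statement is a \emph{conjecture}, not a theorem, and the paper makes no attempt to prove it. It is assumed as a hypothesis (denoted (D)) throughout, and the main results (decidability, quantifier elimination, supersimplicity of $\TheoryofPrimes$) are all conditional on it. Your summary of the known cases and the obstructions for $k\geq 2$ is accurate and appropriate context, though strictly speaking none of it is required here since there is nothing to prove.
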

Note that in fact the condition $\star_{\bar{f}}$ follows easily
from the conclusion that there are infinitely many $m$'s with $f_{i}\left(m\right)$
prime for all $i<k$.  See also Remark \ref{rem:eliminates infinity}.

For a discussion of this conjecture see \cite{MR1016815}. 

Our main result is the following.
\begin{thm}
\label{thm:main}Assuming (D), the theory $T_{+,\Prime}$ is decidable,
unstable and supersimple of $U$-rank $1$.
\end{thm}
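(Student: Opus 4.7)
The plan is to reduce Theorem \ref{thm:main} to a quantifier-elimination-style analysis and then invoke Dickson's conjecture to control realizability of types. I would pass to the expanded language $\mathcal{L}^{*} = \{+,0,1,\Prime\} \cup \{P_{n}\}_{n \geq 2}$, where $P_{n}(x)$ expresses $n \mid x$, so that the pure-additive reduct $T_{+}$ admits quantifier elimination. The central structural claim to establish is that every $\mathcal{L}^{*}$-formula $\varphi(\bar{x})$ is equivalent, modulo $T_{+,\Prime}$, to a Boolean combination of atomic $T_{+}$-formulas and basic primality formulas $\Prime(\ell(\bar{x}))$ with $\ell$ a $\Zz$-linear form. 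It will follow that the type of a tuple $\bar{a}$ over a set $B$ is determined by its $T_{+}$-type together with the set of $\Zz$-linear combinations $\ell(\bar{a},\bar{b})$ (with $\bar{b}$ from $B$) for which $\Prime(\ell(\bar{a},\bar{b}))$ holds, and consistency of such a specification amounts to Dickson's admissibility $\star_{\bar{f}}$ of the associated finite systems, which $(\mathrm{D})$ realizes in $\Zz$.

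From this structural result, decidability follows by exhibiting a recursive axiomatization of $T_{+,\Prime}$: the Presburger axioms for $(+,0,1)$ without order, together with, for every admissible linear system $\bar{f} = \sequence{a_{i}x+b_{i}}{i<k}$ satisfying $\star_{\bar{f}}$, the axiom stating that infinitely many $m$ make all $f_{i}(m)$ simultaneously prime, plus the obvious obstruction schema for non-admissible systems. Completeness is then verified by a back-and-forth between countable saturated models, where $(\mathrm{D})$ realizes finite primality patterns in $\Zz$ and the axiom schema does the corresponding job in any other model.

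For instability I would exhibit the order property using, say, $\varphi(x,y) = \Prime(x+y)$: applying $(\mathrm{D})$ to a suitable family of $\Zz$-linear forms produces, for each $n$, sequences $(a_{i})_{i<n}$, $(b_{j})_{j<n}$ with $\varphi(a_{i},b_{j})$ holding iff $i < j$. For supersimplicity of $U$-rank one, I would show that forking in $T_{+,\Prime}$ is controlled by the $T_{+}$-reduct: $a \ind_{B} c$ iff $a$ is $T_{+}$-independent of $c$ over $B$ in the $\acl$-sense. Since $T_{+}$ has $U$-rank $1$, this yields $U \leq 1$, while non-algebraicity of the generic $1$-type gives $U \geq 1$; the extension axiom of simplicity reduces, via the structural result, to realizing prescribed primality of new $\Zz$-linear forms over $Bc$, which follows from $(\mathrm{D})$.

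The principal obstacle is the first step: both the quantifier-reduction to atomic $T_{+}$-formulas plus primality formulas, and the verification that the Dickson systems arising in non-forking extensions are admissible. The careful bookkeeping of divisibility data needed to control $\star_{\bar{f}}$ in the many-variable setting is the delicate part, since the prime-configuration on a tuple $\bar{a}$ over $B$ involves all $\Zz$-linear forms in $\bar{a}$ with parameters from $B$ and one must show that the $T_{+}$-genericity of $\bar{a}$ forces the inherited admissibility. Once this is accomplished, decidability, instability, and the $U$-rank calculation should follow fairly formally.
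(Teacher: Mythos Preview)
Your overall architecture matches the paper's: expand the language, prove quantifier elimination via back-and-forth, read off decidability from a recursive axiomatization, show instability directly, and prove $U$-rank $1$ by showing forking is algebraic. Two concrete points, however, are genuine gaps rather than bookkeeping.

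First, your expanded language $\mathcal{L}^{*}=\{+,0,1,\Prime\}\cup\{P_n\}$ is not rich enough for the quantifier elimination you claim. The formula $\exists y\,(ny=x\wedge\Prime(y))$ is not equivalent in $\Zz$ to any Boolean combination of $P_m$-atoms and formulas $\Prime(\ell(x))$ with $\ell$ a $\Zz$-linear form, since $x/n$ is not such a form. The paper handles this by introducing, for each $n\ge 2$, a predicate $\Prime_n(x)$ abbreviating $P_n(x)\wedge\Prime(x/n)$, and also adds $-$ to the signature; both are explicitly justified there as necessary. Without $\Prime_n$ your structural claim about the shape of definable sets is false, and the back-and-forth will not go through as stated.

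Second, your plan treats realizability of a finite primality pattern as equivalent to Dickson admissibility $\star_{\bar f}$, but (D) only guarantees the \emph{positive} constraints $\Prime(a_i x+b_i)$. A quantifier-free $1$-type also contains negative instances $\neg\Prime(c_j x+d_j)$, and nothing in (D) by itself lets you force compositeness. The paper proves a separate lemma (its Lemma~\ref{lem:Also composite}): assuming (D), if $\star_{\bar f}$ holds and $(a_i,b_i)\neq(c_j,d_j)$ for all $i,j$, then there are infinitely many $m$ with all $f_i(m)$ prime and all $g_j(m)$ composite. This is proved by an induction on the number of composite constraints, at each step substituting $x\mapsto qx+m$ for a suitable prime $q$. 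You need this (or an equivalent device) both for the back-and-forth in QE and for the non-forking extension step in the $U$-rank argument; your axiom schema as written omits it.

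The remaining pieces of your plan (recursive axiomatization yielding decidability, instability via $\Prime(x+y)$, and ``forking $=$ algebraic'') are essentially what the paper does; the paper in fact proves the stronger $n$-independence property using arithmetic progressions, but your order-property argument would suffice for instability.
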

In essence (D) implies that the set of primes is generic up to congruence
conditions (while it is not generic in the sense of \cite{MR1650667}),
and this allows us to get quantifier elimination in a suitable language.
Forking then turns out to be trivial: forking formulas are algebraic
(Theorem \ref{thm:forking is trivial}).

To show that $\TheoryofPrimes$ is unstable we show that it has the
independence property (see Proposition \ref{prop:arithmatic progressions}).
This turns out to follow from the proof of the Green-Tao theorem about
arithmetic progressions in the primes \cite{GreenTao} (i.e., without
using (D)), as was told to us in a private communication by Tamar
Ziegler (but we also show that this follows from (D)).

\subsection*{Acknowledgments}

\thanks{We would like to thank Tamar Ziegler for telling us about (D) and
for her input on the Green-Tao theorem (see Proposition \ref{prop:arithmatic progressions}). }

\thanks{We would also like to thank Carl Jockusch, Philipp Hieronymi, Lou
van den Dries and Alexis B\`es for reassuring us that the results
stated here are new. }

\section{Quantifier elimination }

In this section we will prove quantifier elimination in $\TheoryofPrimes$
assuming (D) in a suitable language.

Let us first note some useful facts about (D). 
\begin{rem}
\label{rem:bounding the integer}Given a sequence of linear maps $\sequence{f_{i}}{i<k}$
where $f_{i}\left(x\right)=a_{i}x+b_{i}$ as in (D), $\star_{\bar{f}}$
holds iff for every prime $p<N$, $p$ does not divide $\prod_{i<k}f_{i}\left(s\right)$
for all $s\in\Zz$ where $N=\max\left(\set{a_{i}}{i<k}\cup\left\{ k\right\} \right)+1$. \end{rem}
\begin{proof}
If $\star_{\bar{f}}$ fails, then there is some prime $p$ such that
$p$ divides $\prod_{i<k}f_{i}\left(s\right)$ for all $s$. Let $P\left(X\right)\in\Zz\left[X\right]$
be the polynomial $\prod_{i<k}f_{i}\left(X\right)$. Let $P_{p}=P\modp p\in\Ff_{p}\left[x\right]$
(where $\Ff_{p}$ is the prime field of size $p$). It follows that
$P_{p}\left(a\right)=0$ for all $a\in\Ff_{p}$. So either $P_{p}=0$
or $k\geq\deg\left(P_{p}\right)\geq p$,  hence $p\leq k$ or $\prod_{i<k}a_{i}\equiv0\modp p$
(as the leading coefficient) which means that for some $i<k$, $a_{i}\geq p$,
so $p<N$ and we are done. \end{proof}
\begin{lem}
\label{lem:b_i can be negative}Assume (D). Then (D) holds also when
we allow $b_{i}$ to be negative.\end{lem}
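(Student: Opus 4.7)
The plan is to reduce to the non-negative case by a translation of the variable. Suppose $\bar{f} = \sequence{f_i}{i<k}$ with $f_i(x) = a_i x + b_i$, $a_i \geq 1$, $b_i \in \Zz$ (possibly negative), and that $\star_{\bar f}$ holds. Choose a natural number $N$ large enough that $a_i N + b_i \geq 0$ for every $i < k$, and define
\[
g_i(x) = f_i(x+N) = a_i x + (a_i N + b_i).
\]
Then $\bar g = \sequence{g_i}{i<k}$ has non-negative coefficients, so the original form of (D) applies to it as soon as we verify $\star_{\bar g}$.

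To verify $\star_{\bar g}$, I would appeal to Remark \ref{rem:bounding the integer}: it suffices to show that for every prime $p$ there is a non-negative integer $s$ with $p \nmid \prod_{i<k} g_i(s)$. The polynomial $P(X) = \prod_{i<k} f_i(X)$ reduced mod $p$ is a polynomial over $\Ff_p$, so the value $P(t) \pmod p$ depends only on $t \pmod p$. Hence the set of residues $\{P(t) \bmod p : t \geq N\}$ equals $\{P(t) \bmod p : t \in \Zz\}$, which by (the analogous equivalent reformulation of) $\star_{\bar f}$ contains a non-zero element. Picking such a $t \geq N$ and setting $s = t - N \geq 0$ gives $\prod_{i<k} g_i(s) = P(t) \not\equiv 0 \pmod p$, as needed.

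Now (D) applied to $\bar g$ yields infinitely many non-negative integers $m$ with $g_i(m) = f_i(m+N)$ prime for all $i<k$. Setting $m' = m + N$ produces infinitely many natural numbers $m' \geq N$ with $f_i(m')$ prime for every $i<k$, which is the conclusion of (D) for $\bar f$. Note that for $m' \geq N$ each $f_i(m') = a_i m' + b_i \geq a_i N + b_i \geq 0$, so the usual meaning of ``prime'' is unproblematic even though some $b_i$ may be negative.

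I do not expect a real obstacle: the only point requiring care is the passage from $\star_{\bar f}$ to $\star_{\bar g}$, and this is immediate from the observation that divisibility of $P(t)$ by a prime $p$ is a condition depending only on $t \bmod p$, so restricting to $t \geq N$ loses no residue classes.
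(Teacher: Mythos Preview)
Your proof is correct and follows the same strategy as the paper: translate the variable so that the constant terms become non-negative, verify $\star$ for the translated system, and apply (D) there. The only difference is in how $\star_{\bar g}$ is checked. The paper chooses the translation to be a multiple of $K=N!$ (with $N$ as in Remark~\ref{rem:bounding the integer}), so that for each prime $p<N$ one has $f_i'(s)\equiv f_i(s)\pmod p$ literally, and then invokes Remark~\ref{rem:bounding the integer} to restrict attention to those primes. Your observation that $\prod_i f_i(t)\bmod p$ depends only on $t\bmod p$, so shifting by any $N$ loses no residue classes, is more direct and makes the particular size of the shift irrelevant. One small remark: the reduction ``it suffices to check each prime $p$'' is immediate from the definition of $\star$ (a composite witness has a prime factor which is also a witness) rather than from Remark~\ref{rem:bounding the integer}.
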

\begin{proof}
Suppose that $\sequence{f_{i}}{i<k}$ is a sequence of linear maps
$f_{i}\left(x\right)=a_{i}x+b_{i}$ where $a_{i}\geq1$ and $b_{i}\in\Zz$,
and assume that $\star_{\bar{f}}$ holds. Let $N$ be as in Remark
\ref{rem:bounding the integer}. Let $K=N!$ (enough to take the product
of the primes below $N$). Suppose that $l\in\Nn$ is such that $lK+b_{i}>0$
for all $i<k$. Let $f_{i}'\left(x\right)=a_{i}x+a_{i}lK+b_{i}$.
Then $a_{i}\geq1$, $b_{i}'=a_{i}lK+b_{i}>0$, so let us show that
$\star_{\bar{f}'}$ holds (where $\bar{f}'=\sequence{f_{i}'}{i<k}$).
Note that when we compute $N$ in Remark \ref{rem:bounding the integer},
we only use $k$ and $a_{i}$ which haven't changed, so by that remark,
it is enough to check that for no prime $p<N$, $\prod_{i<k}f'_{i}\left(s\right)\equiv0\modp p$
for all $s$. But for such $p$'s, $f_{i}'\left(s\right)=f_{i}\left(s\right)+a_{i}lK\equiv f_{i}\left(s\right)\modp p$,
so $\prod_{i<k}f_{i}'\left(s\right)\equiv\prod_{i<k}f_{i}\left(s\right)\not\equiv0\modp p$. 

By (D), there are infinitely many integers $m$ such that $f_{i}'\left(m\right)$
is prime for all $i<k$. But $f_{i}'\left(m\right)=a_{i}m+a_{i}lK+b_{i}=a_{i}\left(m+lK\right)+b_{i}$.
Hence substituting $m+lK$ for $m$ we get what we wanted. \end{proof}
\begin{lem}
\label{lem:Also composite}Assuming (D), given $f_{i}\left(x\right)=a_{i}x+b_{i}$
with $a_{i},b_{i}$ integers, $a_{i}\geq1$ for all $i<k$ and $g_{j}\left(x\right)=c_{j}x+d_{j}$
with $c_{j},d_{j}$ integers, $c_{j}\geq1$ for all $j<k'$, if $\star_{\bar{f}}$
holds for $\bar{f}=\sequence{p_{i}}{i<k}$ and $\left(a_{i},b_{i}\right)\neq\left(c_{j},d_{j}\right)$
for all $i,j$ then there are infinitely many natural numbers $m$
for which $f_{i}\left(m\right)$ is prime and $g_{j}\left(m\right)$
is composite for all $i<k,j<k'$. 
\end{lem}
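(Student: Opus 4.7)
The plan is to pass to a congruence class $m\equiv r\pmod{P}$ on which every $g_{j}$ is forced to be composite, and then apply (D) to the shifted sequence $f_{i}(Py+r)$.

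First I would split $j<k'$ into two classes: call $j$ \emph{proportional} if $a_{i}d_{j}=b_{i}c_{j}$ for some $i<k$, and \emph{non-proportional} otherwise. For a proportional $j$, writing $c_{j}/a_{i}=p/q$ in lowest terms gives integers $a_{i}',b_{i}'$ with $a_{i}=qa_{i}'$, $b_{i}=qb_{i}'$, $c_{j}=pa_{i}'$, $d_{j}=pb_{i}'$, so $f_{i}=q(a_{i}'x+b_{i}')$ and $g_{j}=p(a_{i}'x+b_{i}')$. If $q>1$ then every prime divisor of $q$ divides $\prod_{i<k}f_{i}(s)$ for all $s$, contradicting $\star_{\bar{f}}$. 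Hence $q=1$, and since $(a_{i},b_{i})\neq(c_{j},d_{j})$ forces $p\neq1$, we get $p\geq2$ and $g_{j}=pf_{i}$. This already makes $g_{j}(m)$ composite whenever $f_{i}(m)$ is prime.

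For each non-proportional $j$, each $b_{i}c_{j}-a_{i}d_{j}$ ($i<k$) is a nonzero integer. I would choose a prime $p_{j}$ so large that it does not divide $c_{j}$ nor any $b_{i}c_{j}-a_{i}d_{j}$, with the $p_{j}$ (over non-proportional $j$) pairwise distinct. Set $r_{j}\equiv-d_{j}c_{j}^{-1}\pmod{p_{j}}$; then $p_{j}\mid g_{j}(m)$ whenever $m\equiv r_{j}\pmod{p_{j}}$, while $f_{i}(r_{j})\equiv(b_{i}c_{j}-a_{i}d_{j})c_{j}^{-1}\not\equiv0\pmod{p_{j}}$ for every $i$. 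Using CRT, pick $P:=\prod p_{j}$ and $r$ with $r\equiv r_{j}\pmod{p_{j}}$ for each non-proportional $j$. Now apply (D), in the form extended by Lemma \ref{lem:b_i can be negative} to allow negative constants, to $F_{i}(y):=f_{i}(Py+r)$. The condition $\star_{\bar{F}}$ is checked by cases on a prime $p'$: if $p'=p_{j}$ for some non-proportional $j$ then $F_{i}(y)\equiv f_{i}(r_{j})\not\equiv0\pmod{p'}$ for all $i,y$; if $p'\nmid P$ then $P$ is invertible mod $p'$, and by $\star_{\bar{f}}$ one picks $s_{0}$ with $\prod_{i}f_{i}(s_{0})\not\equiv0\pmod{p'}$ and solves $Py_{0}+r\equiv s_{0}\pmod{p'}$. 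Hence (D) yields infinitely many $y$ making each $F_{i}(y)=f_{i}(Py+r)$ prime. For the corresponding $m:=Py+r$: the $f_{i}(m)$ are prime by construction; each $g_{j}(m)$ with $j$ proportional is composite by the first step; each $g_{j}(m)$ with $j$ non-proportional is a nonzero multiple of $p_{j}$ of absolute value exceeding $p_{j}$ (automatic for large $y$), hence composite.

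The main obstacle is the proportional case: no congruence condition of the form $p_{j}\mid g_{j}$ can coexist with the primality of the associated $f_{i}$, so one must leverage $\star_{\bar{f}}$ and the hypothesis $(a_{i},b_{i})\neq(c_{j},d_{j})$ to observe that $g_{j}$ is already of the form $pf_{i}$ with $p\geq2$ and hence composite for free. Once the proportional $j$'s are disposed of in this way, the non-proportional $j$'s give independent congruence obstructions that combine smoothly with $\star_{\bar{f}}$ via CRT, and (D) does the rest.
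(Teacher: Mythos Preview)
Your proof is correct, but it takes a genuinely different route from the paper's. The paper argues by induction on $k'$: given $m$ making $f_{i}(m)$ prime and $g_{0}(m),\ldots,g_{k'-1}(m)$ composite, if $q:=g_{k'}(m)$ happens to be prime it substitutes $x\mapsto qx+m$, observes that $g_{k'}(qx+m)=q(c_{k'}x+1)$ is automatically composite, checks $\star$ for the shifted $f_{i}$'s, and invokes the inductive hypothesis. No proportional/non-proportional dichotomy ever appears; the bad case where $g_{j}$ and some $f_{i}$ share a root is handled implicitly by choosing $m$ large enough that $f_{i}(m)\neq g_{j}(m)$.

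Your argument, by contrast, is a single-shot reduction: you isolate the $g_{j}$ that are scalar multiples of some $f_{i}$ (these are composite for free once $f_{i}$ is prime, using $\star_{\bar f}$ to force the scalar to be on the $g$-side), and for the remaining $g_{j}$ you plant a congruence obstruction via CRT, then apply (D) exactly once to the shifted system. This is more explicit and arguably more constructive --- one sees directly which arithmetic progression to search in --- at the cost of the extra case analysis. The paper's induction is shorter to write and avoids that split, but iterates (D) through the induction. Both approaches are valid and roughly comparable in depth.
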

Before giving the proof, we note that this lemma generalizes Lemma
1 from \cite{Batemanetc}, which was key in the proof there of the
undecidability of $\TheoryofPrimesOrder$. 
\begin{cor}
\cite[Lemma 1]{Batemanetc}(Assuming (D)) Let $b_{0},\ldots,b_{n-1}$
be an increasing sequence of natural numbers, and assume that there
is no prime $p$ such that $\set{b_{i}\modp p}{i<n}=p$. Then there
are infinitely many natural numbers $x$ such that $x+b_{0},\ldots,x+b_{n-1}$
are \textbf{consecutive} primes. \end{cor}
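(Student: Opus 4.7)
The plan is to reduce the corollary to Lemma \ref{lem:Also composite}. To guarantee that $x + b_0, \ldots, x + b_{n-1}$ are \emph{consecutive} primes (not merely simultaneously prime), one must also rule out any prime lying strictly between $x + b_i$ and $x + b_{i+1}$ for each $i$. Writing $\{c_0 < \cdots < c_{k'-1}\}$ for the set $\{b_0, b_0 + 1, \ldots, b_{n-1}\} \setminus \{b_0, \ldots, b_{n-1}\}$ of ``holes'' in the sequence, this is exactly the requirement that $x + c_j$ be composite for every $j < k'$. I would therefore apply Lemma \ref{lem:Also composite} to the linear maps $f_i(x) = x + b_i$ ($i < n$) and $g_j(x) = x + c_j$ ($j < k'$). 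All of these have leading coefficient $1$ and integer constant term, and by construction $b_i \neq c_j$ for all $i, j$, so the pairs $(1, b_i)$ and $(1, c_j)$ are distinct.

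The only hypothesis of the lemma that requires work is $\star_{\bar{f}}$. By Remark \ref{rem:bounding the integer} (applied with $N = n + 1$, since all $a_i = 1$ and $k = n$), it suffices to check that for each prime $p \leq n$ there exists some integer $s$ with $p \nmid \prod_{i < n}(s + b_i)$. This fails precisely when for every $s \in \Ff_p$ some $s + b_i \equiv 0 \modp p$, i.e., when $\{-b_i \bmod p : i < n\}$ exhausts $\Ff_p$. Since negation is a bijection on $\Ff_p$, this is equivalent to $\{b_i \bmod p : i < n\} = \Ff_p$, which the hypothesis explicitly forbids. For a prime $p > n$ the set of residues has at most $n < p$ elements, so the condition holds automatically there.

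Lemma \ref{lem:Also composite} then produces infinitely many $m \in \Nn$ such that every $m + b_i$ is prime and every $m + c_j$ is composite. Since every integer strictly between $m + b_0$ and $m + b_{n-1}$ that is not one of the $m + b_i$ is of the form $m + c_j$ for some $j$, there are no other primes in the interval, and the primes $m + b_0, \ldots, m + b_{n-1}$ are consecutive, as desired. The only conceptual step is the translation between the residue hypothesis and $\star_{\bar{f}}$ via Remark \ref{rem:bounding the integer}; I do not expect any further obstacle.
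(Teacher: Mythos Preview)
Your proposal is correct and follows exactly the paper's approach: apply Lemma~\ref{lem:Also composite} with $f_i(x)=x+b_i$ and $g_j(x)=x+c_j$ where the $c_j$ run over the integers strictly between the $b_i$'s. The paper's proof is a one-line reference to that lemma; you have simply spelled out the verification of $\star_{\bar f}$ (via Remark~\ref{rem:bounding the integer} and the residue hypothesis) that the paper leaves implicit.
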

\begin{proof}
[Proof of Corollary] This is immediate from Lemma \ref{lem:Also composite}
by taking $f_{i}\left(x\right)=x+b_{i}$ and $g_{j}\left(x\right)=x+c_{j}$
where $c_{j}$ run over all numbers between the $b_{j}$'s. 
\end{proof}

\begin{proof}
[Proof of Lemma]By induction on $k'$. For $k'=0$ there is nothing
to prove by (D) and Lemma \ref{lem:b_i can be negative}. 

Suppose the lemma is true for $k'$ and prove it for $k'+1$. It is
enough to prove that for any $n$, there is some $m>n$ such that
$f_{i}\left(m\right)$ is prime for all $i<k$ and $g_{j}\left(m\right)$
is not prime for all $j<k'$. 

Fix $n$. We may assume by enlarging it that for no $m>n$ is it the
case that $f_{i}\left(m\right)=g_{j}\left(m\right)$ for $i<k,j\leq k'$. 

Let $m>n$ be so that $f_{i}\left(m\right)$ is prime for all $i<k$
and $g_{j}\left(m\right)$ is composite for all $j<k'$. If it happens
that $g_{k'}\left(m\right)$ is composite, then we are done, so suppose
that $q=g_{k'}\left(m\right)$ is prime. Let $f_{i}'\left(x\right)=a_{i}\left(qx+m\right)+b_{i}$
and $g_{j}'\left(x\right)=c_{j}\left(qx+m\right)+d_{j}$ for $i<k$
and $j<k'+1$. Then $g_{k'}'\left(x\right)=c_{j}qx+q$ is composite
for all $x\geq1$ (so that $c_{j}x+1\geq2$). Hence it is enough to
find $m'$ large enough so that $f_{i}'\left(m'\right)$ is prime
for all $i<k$ and $g_{j}'\left(m'\right)$ is composite for all $j<k'$. 

By the induction hypothesis, it is enough to check that $\star_{\bar{f}'}$
holds (because $\left(a_{i}q,a_{i}m+b_{i}\right)\neq\left(c_{j}q,c_{j}m+d_{j}\right)$).
Suppose that $p>1$ is a prime which divides $\prod_{i<k}f_{i}'\left(s\right)$
for all $s$. Hence $\prod_{i<k}f_{i}'\left(s\right)\equiv0\modp p$,
and if $p\neq q$, it follows (as $q$ is invertible modulo $p$)
that $\prod_{i<k}f_{i}\left(s\right)\equiv0\modp p$ for all $s$
--- a contradiction. If $p=q$, then $f_{i}'\left(x\right)\equiv a_{i}m+b_{i}\equiv f_{i}\left(m\right)\modp q$
for all $x$, hence for some $i<k$, $f_{i}\left(m\right)=q=g_{k'}\left(m\right)$,
contradicting our choice of $m$. 
\end{proof}
Expand the language $L=\left\{ +,\Prime,0,1\right\} $ to include
the Presburger predicates $P_{n}$ for $2\leq n<\omega$ interpreted
as $P_{n}\left(x\right)\Leftrightarrow x\equiv0\modp n$, and also
the predicates $\Prime_{n}$ for $2\leq n<\omega$ interpreted as
$\Prime_{n}\left(x\right)\Leftrightarrow P_{n}\left(x\right)\land\Prime\left(x/n\right)$.
We need the latter predicate in order to eliminate the quantifiers
from $\varphi\left(x\right)=\exists y\left(ny=x\land\Prime\left(y\right)\right)$.
We also add negation (as a unary function). We need negation because
of formulas of the form $\varphi\left(x,y\right)=\Prime\left(x-y\right)=\exists w\left(w+y=x\land\Prime\left(w\right)\right)$.

Let $L^{*}$ be the resulting language $\set{+,-,1,0,\Prime,\Prime_{n},P_{n}}{2\leq n<\omega}$,
and let $\TheoryofPrimes^{*}$ be the complete theory of $M^{*}$
--- the structure with universe $\Zz$ in $L^{*}$. Note that all
the new predicates are definable from $L$. 
\begin{rem}
\label{rem:first order}The condition $\star_{\bar{p}}$ of Dickson's
conjecture is first-order expressible in $L^{*}$. This means that
for every tuple $a_{i},i<k$ of positive integers, there is a formula
$\varphi_{\bar{a}}\left(y_{0},\ldots,y_{k-1}\right)$ such that for
any choice of $b_{i}\in\Zz$ for $i<k$, $M^{*}\models\varphi_{\bar{a}}\left(\bar{b}\right)$
iff $\star_{\bar{f}}$ holds where $f_{i}\left(x\right)=a_{i}x+b_{i}$. \end{rem}
\begin{proof}
Recall Remark \ref{rem:bounding the integer} and the choice of $N$
from there (which depends only on $\sequence{a_{i}}{i<k}$ and $k$).
Let $\varphi_{\bar{a}}\left(\bar{y}\right)$ say that for every prime
$p<N$, for some $0\leq x<p$, for all $i<k$, $\neg P_{p}\left(a_{i}x+y_{i}\right)$.
Note that $\varphi_{\bar{a}}$ is quantifier-free in $L^{*}$ (as
it contains $1$). \end{proof}
\begin{rem}
\label{rem:eliminates infinity} Given $\bar{f}=\sequence{f_{i}}{i<k}$
as in Remark \ref{rem:bounding the integer}, if there are more than
$2k$ integers $m$ such that $f_{i}\left(m\right)$ is prime or a
negation of a prime, then $\star_{\bar{f}}$ holds. Indeed, otherwise
there is some prime $p$ which witnesses this, but then for some $i$
and three different $m$'s, $\left|p_{i}\left(m\right)\right|=p$
--- a contradiction. \end{rem}
\begin{lem}
\label{lem:Elimination of Quantifiers}$\TheoryofPrimes^{*}$ eliminates
quantifiers in $L^{*}$ provided (D). \end{lem}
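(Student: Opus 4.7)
The plan is to verify the classical criterion for quantifier elimination by producing, for each quantifier-free $\varphi(x,\bar y)$ in $L^{*}$, an equivalent quantifier-free $\theta(\bar y)$ for $\exists x\,\varphi(x,\bar y)$. I put $\varphi$ in disjunctive normal form and treat each disjunct --- a conjunction of literals in $x,\bar y$ --- separately. After unfolding $\Prime_n(t)$ as $P_n(t)\wedge\Prime(t/n)$, the literals that remain are linear equations $s=0$, linear non-equations $s\neq 0$, congruences $\pm P_n(s)$, and primality literals $\pm\Prime(s)$, each $s$ a $\Zz$-linear polynomial in $x,\bar y$. Since $\TheoryofPrimes^{*}$ is complete, it suffices to exhibit such a $\theta$ and check $\Zz\models\forall\bar y\,(\exists x\,\varphi\leftrightarrow\theta)$.

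If a disjunct contains an equation $ax+t(\bar y)=0$ with $a\neq 0$ (WLOG $a>0$ by replacing $x$ with $-x$), the existential reduces to $P_a(-t(\bar y))$ together with the remaining literals rewritten after multiplying through by $a$: a term $bx+s$ is replaced by $u$ with $au=as-bt$, so $P_n(bx+s)$ becomes ``$a\mid bt$ and $an\mid as-bt$'' and $\Prime(bx+s)$ becomes $\Prime_a(as-bt)$ --- precisely the reason for adding $\Prime_n$ to $L^{*}$ --- with analogous rewrites for non-equations and for negations. This handles the algebraic case without invoking (D).

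In the remaining generic case no equation mentions $x$. By the Chinese Remainder Theorem the congruence literals on the terms $a_ix+t_i(\bar y)$ reduce, modulo a quantifier-free side condition on $\bar y$, to a single congruence $x\equiv r(\bar y)\pmod M$ for explicit $M\in\Nn$ and term $r$. Substituting $x=MY+r(\bar y)$ and negating those terms with negative leading coefficient (using that $\Prime$ is symmetric under $x\mapsto-x$), the primality literals become $\Prime(f_i(Y))$ for $i<k$ and $\neg\Prime(g_j(Y))$ for $j<k'$, where $f_i(Y)=a_iMY+(a_ir+t_i)$ and $g_j(Y)=c_jMY+(c_jr+s_j)$ all have positive leading coefficient; literals with vanishing $x$-coefficient move into a quantifier-free side condition, as do the finitely many linear non-equations, which exclude only finitely many values of $Y$. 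I take $\theta(\bar y)$ to be the conjunction of these side conditions, the pairwise distinctness of the coefficient pairs $(a_iM,a_ir+t_i)$ and $(c_jM,c_jr+s_j)$, and the assertion $\star_{\bar f}$, which is quantifier-free in $L^{*}$ by Remark \ref{rem:first order}. That $\theta(\bar a)\Rightarrow\exists x\,\varphi(x,\bar a)$ for $\bar a\in\Zz$ is then a direct application of Lemma \ref{lem:Also composite} (with Lemma \ref{lem:b_i can be negative} accommodating the signs of the constants), and the converse follows from Remark \ref{rem:eliminates infinity}: any witness $c$ yields $Y=(c-r)/M$ at which every $f_i(Y)$ is prime, which forces $\star_{\bar f}$ and the distinctness.

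The main obstacle is the bookkeeping: one must confirm that the congruences can be merged coherently into a single $r(\bar y)$ over a quantifier-free predicate on $\bar y$, that $\star_{\bar f}$ is preserved by the substitution $x\mapsto MY+r$ --- this is automatic, since the new leading coefficients $a_iM$, together with $k$, determine the bound of Remark \ref{rem:bounding the integer} --- and that the DNF-expansion of the $\Prime_n$-literals interacts correctly with the sign normalisation and with the absorption into $\theta(\bar y)$ of ``trivial'' literals (those in $\bar y$ only, or those excluding the finitely many forbidden values of $Y$). Once this bookkeeping is in place, Lemma \ref{lem:Also composite} supplies the actual arithmetic content of the elimination.
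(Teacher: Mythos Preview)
Your approach is genuinely different from the paper's: you attempt a direct syntactic elimination of one existential quantifier, while the paper uses a back-and-forth argument between small substructures of a monster model. The syntactic route is viable in principle, but your proposal has a real gap in the converse direction of the equivalence $\exists x\,\varphi(x,\bar y)\leftrightarrow\theta(\bar y)$.

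You claim that a single witness $c$ to $\varphi(c,\bar a)$ forces $\star_{\bar f}$ via Remark~\ref{rem:eliminates infinity}, but that remark requires more than $2k$ witnesses, not one. A single $Y$ at which all $f_i(Y)$ are prime does \emph{not} imply $\star_{\bar f}$. Concretely, take $\varphi(x,y)=\Prime(x)\wedge\Prime(x+y)\wedge\Prime(x+2y)$ and $y=2$: then $x=3$ witnesses $\exists x\,\varphi(x,2)$ (since $3,5,7$ are prime), yet $\star_{\bar f}$ fails for $\bar f=(X,X+2,X+4)$ because $3\mid X(X+2)(X+4)$ for every $X$. So your $\theta$ is strictly stronger than $\exists x\,\varphi$. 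The paper's back-and-forth avoids this entirely: in its Case~2, when the type has only finitely many solutions, the failure of $\star_{\bar f}$ forces $m_i a+e_i=\pm p$ for some small prime $p$, so $a$ already lies in the substructure $A$. To repair the syntactic argument you must add to $\theta$ a finite disjunction over these sporadic candidates: when $\varphi_{\bar m}(\bar b)$ fails, some $m_i x+b_i=\pm p$ with $p$ below the bound of Remark~\ref{rem:bounding the integer}, giving finitely many explicit values of $x$ (as terms in $\bar y$ after clearing $m_i$), and for each one $\varphi(x,\bar y)$ is a quantifier-free condition on $\bar y$.

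A second, smaller issue: your ``unfolding'' of $\Prime_n(t)$ as $P_n(t)\wedge\Prime(t/n)$ does not yield literals whose arguments are $\Zz$-linear polynomials in $x,\bar y$, since $t/n$ is not an $L^*$-term when $t$ involves the parameters $\bar y$. The paper handles this semantically by closing its substructures under division, so that $(mk+c)/n$ is an available parameter whenever $P_n(mk+c)$ holds. In the syntactic approach you must instead keep the $\Prime_n$ literals intact through the substitution $x=MY+r$, and then observe that under the side condition $P_n(ar+s)$ the atomic formulas $P_p\bigl((aM/n)t+(ar+s)/n\bigr)$ and the equalities appearing in $\varphi_{\bar m}$ and in the distinctness conditions can be rewritten as $P_{pn}(\cdot)$ and ordinary equalities on $n\cdot(\text{the quotient})$, which \emph{are} $L^*$-formulas in $\bar y$. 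This is more than the routine bookkeeping your final paragraph suggests.
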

\begin{proof}
We start with the following observation.
\begin{itemize}
\item [$\diamondsuit$] By Remark \ref{rem:first order} and Lemma \ref{lem:Also composite},
our assumption that Dickson's conjecture holds translates into a first
order statement: for every $n$ and every choice of positive integers
$\sequence{a_{i}}{i<k}$ and $\sequence{a_{j}'}{j<k'}$ and for all
$\sequence{b_{i}}{i<k}$ and $\sequence{b_{j}'}{j<k'}$, if $\varphi_{\bar{a}}\left(\bar{b}\right)$
holds and $\left(a_{i},b_{i}\right)\neq\left(a_{j}',b_{j}'\right)$
for all $i,j$ then there are at least $n$ elements $x$ with $\bigwedge_{i<k}\Prime\left(a_{i}x+b_{i}\right)\land\bigwedge_{i<k'}\neg\Prime\left(a_{j}'x+b'_{j}\right)$.
Conversely, By Remark \ref{rem:eliminates infinity}, if there are
more than $2k$ such elements $x$, then $\varphi_{\bar{a}}\left(\bar{b}\right)$
holds. Together, $\varphi_{\bar{a}}\left(\bar{b}\right)\wedge\bigwedge_{i,j}\left(a_{i},b_{i}\right)\neq\left(a_{j}',b_{j}'\right)$
holds iff there are more than $2k$ elements $x$ with 
\[
\bigwedge_{i<k}\Prime\left(a_{i}x+b_{i}\right)\land\bigwedge_{i<k'}\neg\Prime\left(a_{j}'x+b_{j}'\right).
\]
(Recall that $\Prime$ is contains the primes and their negations.)
\end{itemize}
In order to prove quantifier elimination we will use a back-and-forth
criteria. Namely, suppose that $\C\models\TheoryofPrimes^{*}$ is
a monster model (very large, saturated model) and that $h:A\to B$
is an isomorphism of small substructures $A,B$. Given $a\in\C\backslash A$
we want to extend $h$ so that its domain contains $a$.

We may assume, by our choice of language (which includes $\Prime_{n}$
and $-$), that both $A$ and $B$ are groups such that if $c\in A$
and $\C\models P_{n}\left(a\right)$ then $c/n\in A$ and similarly
for $B$. Why? for such a $c$, elements of the group generated by
adding $c/n$ to $A$ have the form $m\left(c/n\right)+b$ for $m\in\Zz$
and $b\in A$. We have to show that the map taking $c/n$ to $h\left(c\right)/n$
and extends $h$ is an isomorphism. For instance, we have to show
that if $\C\models\Prime\left(m\left(c/n\right)+b\right)$ then $\C\models\Prime\left(m\left(h\left(c\right)/n\right)+h\left(b\right)\right)$.
But $\C\models\Prime\left(m\left(c/n\right)+b\right)$ iff $\C\models\Prime_{n}\left(mc+nb\right)$.
Similarly we deal with $\Prime_{m}$ and $P_{m}$.  

Let $p^{a,A,h}\left(x\right)=\tp^{\qf}\left(a/A\right)$, and let
$q^{a,A,h}\left(x\right)=h\left(p^{a,A,h}\right)$. Let $p_{\equiv}^{a,A,h}=p^{a,A,h}\upharpoonright L_{\equiv}^{*}$
and $p_{\Prime}^{a,A,h}=p^{a,A,h}\upharpoonright L_{\Prime}^{*}$,
where $L_{\equiv}^{*}=L^{*}\backslash\set{\Prime,\Prime_{n}}{2\leq n<\omega}$
and $L_{\Prime}^{*}=L^{*}\backslash\set{P_{n}}{2\leq n<\omega}$,
so that $p^{a,A,h}=p_{\equiv}^{a,A,h}\cup p_{\Prime}^{a,A,h}$, and
we have to realize $q^{a,A,h}$. 
\begin{claim}
\label{claim:no need for modulo}It is enough to prove that we can
realize $q_{\Prime}^{a,A,h}=h\left(p_{\Prime}^{a,A,h}\right)$ for
all $A,a$ and $h$ as above.\end{claim}
\begin{proof}
Easily, as we included $1$ in the language, $q_{\equiv}^{a,A,h}$
is isolated by $\set{x\neq c}{c\in B}$  and equations of the form
$x\equiv k\modp n$ for $k<n$,  and for every $n<\omega$ there
is exactly one $k<n$ with such an equation appearing in $q^{a,A,h}$.
Also, every finite set of such equations is implied by one such equation
(e.g., if the equations are $\set{x\equiv k_{i}\modp{n_{i}}}{i<s}$
then take $x\equiv k\modp{\prod_{i<s}n_{i}}$ where $k$ is such that
this equation is in $q^{a,A,h}$). Hence it is enough to show that
$x\equiv k\modp n\cup q_{\Prime}^{a,A,h}\left(x\right)$ is consistent
($q_{\Prime}^{a,A,h}$ already contains $\set{x\neq c}{c\in B}$).
As $a\equiv k\modp n$, $b=\left(a-k\right)/n\in\C$. Let $p^{b,A,h}=\tp^{\qf}\left(b/A\right)$
so by our assumption there is some $d\in\C$ such that $d\models h\left(p^{b,A,h}\right)_{\Prime}$.
Then $nd+k\models q_{\Prime}^{a,A,h}\left(x\right)$ and of course
satisfies the equation $x\equiv k\modp n$. 
\end{proof}
Let $p_{\Prime_{0}}^{a,A,h}=p^{a,A,h}\upharpoonright L_{\Prime_{0}}$
where $L_{\Prime_{0}}=L_{\Prime}\backslash\set{\Prime_{n}}{2\leq n<\omega}$.
\begin{claim}
It is enough to prove that we can realize $q_{\Prime_{0}}^{a,A,h}=h\left(p_{\Prime_{0}}^{a,A,h}\right)$
for all $A,a$ and $h$ as above. \end{claim}
\begin{proof}
This is similar to Claim \ref{claim:no need for modulo}. It is enough
to show that $q_{\Prime_{0}}^{a,A,h}\left(x\right)\cup\Sigma\left(x\right)$
is consistent where $\Sigma$ is a finite set of formulas from $q_{\Prime}^{a,A,h}\backslash q_{\Prime_{0}}^{a,A,h}$.
So $\Sigma$ consists of formulas of the form $\Prime_{n}\left(mx+c\right)$
or its negation for $m\in\Zz$, $1<n\in\Nn$ and $c\in B$. Without
loss of generality, by replacing the $n$'s with their product $N$
and $\Prime_{n}\left(mx+c\right)$ by $\Prime_{N}\left(\left(N/n\right)\left(mx+c\right)\right)$,
we may assume that all the $n$'s appearing in $\Sigma$ are equal
to $n>1$. Let $b=\left(a-k\right)/n$ where $a\equiv k\modp n$ and
$k<n$. Let $p^{b,A,h}=\tp^{\qf}\left(b/A\right)$. By our assumption
there is some $d\in\C$ such that $d\models h\left(p^{b,A,h}\right)_{\Prime_{0}}$.
Let us check that $nd+k\models q_{\Prime_{0}}^{a,A,h}\left(x\right)\cup\Sigma\left(x\right)$. 

First, if $\varphi\left(x,c\right)\in q_{\Prime_{0}}^{a,A,h}\left(x\right)$
($c$ a tuple from $B$) then $\C\models\varphi\left(a,h^{-1}\left(c\right)\right)$
so that $\C\models\varphi\left(nb+k,h^{-1}\left(c\right)\right)$
so $d\models\varphi\left(nx+k,c\right)$ so $nd+k\models\varphi\left(x,c\right)$.

Now, suppose that $\Prime_{n}\left(mx+c\right)\in\Sigma$. 

Then $\C\models\Prime_{n}\left(ma+h^{-1}\left(c\right)\right)$, so
$\C\models\Prime_{n}\left(m\left(nb+k\right)+h^{-1}\left(c\right)\right)$.
Hence $m\left(nb+k\right)+h^{-1}\left(c\right)$ is divisible by $n$
which means that $mk+h^{-1}\left(c\right)$ is divisible by $n$,
and as $h$ is an isomorphism (and the language includes $1$), so
is $mk+c$, hence $m\left(nd+k\right)+c$ is also divisible by $n$.
Moreover the quotient $e=\left[mk+h^{-1}\left(c\right)\right]/n\in A$
maps to $e'=\left[mk+c\right]/n\in B$. As $\C\models\Prime\left(mb+e\right)$,
it follows that $\C\models\Prime\left(md+e'\right)$, so that $\C\models\Prime_{n}\left(m\left(nd+k\right)+c\right)$.
The same logic works if $\neg\Prime_{n}\left(mx+c\right)\in\Sigma$. 
\end{proof}
Divide into cases.
\begin{casenv}
\item There are infinitely many solutions to $p_{\Prime_{0}}^{a,A,h}$.

Given any finite set $\Sigma\subseteq q_{\Prime_{0}}$, it has the
form 
\[
\set{\Prime\left(m_{i}x+c_{i}\right)}{i<k}\cup\set{\neg\Prime\left(m'_{j}x+c'_{j}\right)}{j<k'}
\]
 where $m_{i},m_{j}'\in\Zz$ and $c_{i},c_{j}'\in B$ (it also includes
formulas of the form $x\neq c$). As\footnote{Here we use the fact that $\Prime$ contains both the primes and their
negations.} $\C\models\forall x\Prime\left(x\right)\leftrightarrow\Prime\left(-x\right)$,
we may assume that $m_{i},m_{j}'\geq1$. Also, it is of course impossible
that $\left(m_{i},c_{i}\right)=\left(m_{j}',c'_{j}\right)$. By $\diamondsuit$,
it is enough to check that $\C\models\varphi_{\bar{m}}\left(\bar{c}\right)$
where $\bar{m}=\sequence{m_{i}}{i<k}$ and $\bar{c}=\sequence{c_{i}}{i<k}$
and $\varphi_{\bar{m}}$ is from Remark \ref{rem:first order}. As
$\varphi_{\bar{m}}$ is quantifier-free, and as $\C\models\varphi_{\bar{m}}\left(h^{-1}\left(\bar{c}\right)\right)$
(because $h^{-1}\left(\Sigma\right)$ has infinitely many solutions
and by $\diamondsuit$), we are done. 

\item There are only finitely many solutions to $p_{\Prime_{0}}$.

By $\diamondsuit$, and as $\C\models\forall x\Prime\left(x\right)\leftrightarrow\Prime\left(-x\right)$,
there are some $m_{i}\geq1,e_{i}\in A$ such that $\set{\Prime\left(m_{i}x+e_{i}\right)}{i<k}$
already has finitely many solutions. Hence $\varphi_{\bar{m}}\left(\bar{e}\right)$
fails, so for some $p<N$ (see Remark \ref{rem:first order}), there
is some $i<k$ such that $P_{p}\left(m_{i}a+e_{i}\right)$. But as
$\Prime\left(m_{i}a+e_{i}\right)$, it must be that $\pm p=m_{i}a+e_{i}$.
As $\pm p,e_{i}\in A$, and as $A$ is closed under dividing by $m_{i}$,
it follows that $a\in A$, and we are done. 

\end{casenv}
\end{proof}

\section{\label{sec:Classifying}Decidability and classification}

We start with the decidability result that is now almost immediate. 
\begin{cor}
The theory $\TheoryofPrimes^{*}$ is decidable and hence so is $\TheoryofPrimes$
provided that Dickson's conjecture holds.\end{cor}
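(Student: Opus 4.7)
The plan is to upgrade Lemma \ref{lem:Elimination of Quantifiers} to decidability by exhibiting an explicit recursive axiomatization $T_{0}$ of $\TheoryofPrimes^{*}$ and then invoking the standard fact that any complete, recursively axiomatized theory is decidable. Decidability of $\TheoryofPrimes$ will then follow by a one-step syntactic reduction using that $L^{*}\setminus L$ is $\emptyset$-definable in $L$.

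Concretely, I would let $T_{0}$ consist of: (i) the classical (recursive) axiomatization of Presburger arithmetic in the reduct $\{+,-,0,1,P_{n}\mid n\ge 2\}$; (ii) for each $n\ge 2$, the defining axioms $\forall x(\Prime_{n}(x)\to P_{n}(x))$ and $\forall x\forall y(x=ny\to(\Prime_{n}(x)\leftrightarrow\Prime(y)))$; (iii) the symmetry axiom $\forall x(\Prime(x)\leftrightarrow\Prime(-x))$; (iv) for every non-negative integer $n$, the sentence $\Prime(\underline{n})$ if $n$ is prime and $\neg\Prime(\underline{n})$ otherwise, where $\underline{n}$ is the closed term $1+\cdots+1$ with $n$ summands; (v) the Dickson schema, consisting of, for each pair of tuples of positive integers $\bar{a}=\sequence{a_{i}}{i<k}$ and $\bar{a}'=\sequence{a'_{j}}{j<k'}$, the single sentence expressing the equivalence stated in $\diamondsuit$. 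Each of (i)--(v) is uniformly recursive: primality is computable, Presburger is classical, and the formulas $\varphi_{\bar{a}}$ are computable from $\bar{a}$ by Remark \ref{rem:first order}. Hence $T_{0}$ is a recursive set of axioms.

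Completeness of $T_{0}$ is the heart of the argument. Inspecting the proof of Lemma \ref{lem:Elimination of Quantifiers}, every appeal to Dickson's conjecture is routed through $\diamondsuit$, and the remainder of the argument is a purely back-and-forth manipulation between substructures. Since $\diamondsuit$ is precisely what the Dickson schema (v) asserts, the same proof shows that $T_{0}$ itself eliminates quantifiers in $L^{*}$. Given QE, any closed $L^{*}$-sentence reduces, modulo $T_{0}$, to a Boolean combination of closed atomic formulas; the terms appearing are built only from $0,1,+,-$, so they evaluate in any model of $T_{0}$ to an element of the form $\pm\underline{n}$, and atomic formulas on such terms are resolved by axioms (i)--(iv). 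Thus $T_{0}$ decides every $L^{*}$-sentence, is complete, and hence equals $\TheoryofPrimes^{*}$; combined with recursiveness this yields decidability of $\TheoryofPrimes^{*}$.

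Finally, each symbol of $L^{*}\setminus L$ is $\emptyset$-definable in $L$ with a computable translation ($P_{n}(x)\equiv\exists y(x=ny)$, $\Prime_{n}(x)\equiv\exists y(x=ny\wedge\Prime(y))$, and $-$ via its Presburger definition), and $L\subseteq L^{*}$, so for any $L$-sentence $\varphi$ we have $\varphi\in\TheoryofPrimes$ iff $\varphi\in\TheoryofPrimes^{*}$; a decision procedure for the latter therefore decides the former. The only delicate point is verifying in (v) that $\diamondsuit$ genuinely captures every use of (D) in the QE proof, so that QE is a consequence of $T_{0}$ rather than of the whole truth about $\Zz$; this is a routine line-by-line check but is where the whole argument earns its keep.
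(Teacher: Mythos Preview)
Your proposal is correct and follows essentially the same strategy as the paper: extract a recursive set of axioms sufficient for the back-and-forth argument of Lemma~\ref{lem:Elimination of Quantifiers}, observe that this set together with the quantifier-free diagram of $\Zz$ is complete, and conclude decidability. The paper says this more tersely --- it names $\Sigma$ as ``the axioms we used'' and $\Sigma'$ as the complete quantifier-free $L^{*}$-theory of $\Zz$, then notes that any two saturated models of $\Sigma\cup\Sigma'$ are isomorphic via the back-and-forth starting from the substructures generated by $1$ --- whereas you spell the axioms out explicitly and route completeness through QE plus evaluation of closed terms. These are equivalent packagings of the same idea.

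One small wrinkle worth fixing: your clause (v), a \emph{single} sentence per $(\bar a,\bar a')$ expressing the final biconditional of $\diamondsuit$ (namely $\varphi_{\bar a}(\bar b)\wedge\text{distinct}\leftrightarrow\exists^{>2k}x(\cdots)$), does not literally yield the \emph{infinitely many} solutions that Case~1 of the QE proof invokes --- it only gives more than $2k$. You should either take (v) to be the full schema from the first sentence of $\diamondsuit$ (for each $n$ and each $(\bar a,\bar a')$ the sentence $\forall\bar b\,\forall\bar b'\,[\varphi_{\bar a}(\bar b)\wedge\text{distinct}\to\exists^{\ge n}x(\cdots)]$), which is still recursive; or else remark that infinitude already follows from your version of (v) by duplicating the list $\bar f$ to length $2^{m}k$ (this preserves $\varphi_{\bar a}$ by an easy Presburger count, and then the instance of (v) for the longer list forces more than $2^{m+1}k$ solutions for every $m$). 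Either way your ``routine line-by-line check'' goes through, but it is not quite as routine as stated.
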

\begin{proof}
Observing the proof of Lemma \ref{lem:Elimination of Quantifiers},
we see that we can recursively enumerate the axioms that we used.
Let us denote this set by $\Sigma$. Let $\Sigma'$ be the complete
quantifier-free theory of $\Zz$ in $L^{*}$. Then $\Sigma'$ is recursive
and contained in $\TheoryofPrimes^{*}$. 

Then the proof gives us that if $M_{1},M_{2}$ are two saturated models
of $\Sigma\cup\Sigma'$, then they are isomorphic (start with $A,B$
being the structures generated by $1$ in $M_{1},M_{2}$ respectively).
This implies that $\Sigma\cup\Sigma'$ is complete and hence decidable.

\end{proof}
Now we turn to classification in the sense of \cite{Sh:c}, where
one is interested in classifying theories by finding ``classes''
having interesting properties in the class and outside of it. The
most studied such class is that of stable theories, which is a very
well-behaved and well-understood class. Containing it is the class
of simple theories, and among them the ``simplest'' simple theories
are supersimpe of $U$-rank 1. For the definition of simple and supersimple
theories we refer the reader to e.g., \cite[Chapter 7, Definition 8.6.3]{TentZiegler}. 
\begin{thm}
\label{thm:forking is trivial}Assuming (D), $\TheoryofPrimes^{*}$
(and $\TheoryofPrimes$) is supersimple of $U$-rank $1$: if $\varphi\left(x,a\right)$
forks over $A$ where $x$ is a singleton and $a$ is some tuple from
$A$ then $\varphi$ is algebraic (i.e., $\varphi\vdash\bigvee_{i<k}x=c_{i}$).
\end{thm}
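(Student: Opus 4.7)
By Lemma~\ref{lem:Elimination of Quantifiers} I may assume $\varphi(x, \bar{a})$ is quantifier-free, and after taking disjunctive normal form reduce to the case of a single \emph{atom} $\psi(x, \bar{a})$---a conjunction of literals of the forms $\Prime(\alpha x + t(\bar{a}))$, $\neg\Prime(\alpha x + t(\bar{a}))$, $P_{n}(\cdot)$, $\neg P_{n}(\cdot)$, $x \neq s(\bar{a})$ (absorbing $\Prime_{n}$ via $\Prime_{n}(y) \equiv P_{n}(y) \wedge \Prime(y/n)$ and using $\Prime(y) \leftrightarrow \Prime(-y)$ to make leading coefficients positive). Any non-algebraic $\varphi$ is realized by a point outside the algebraic closure of its parameters, hence lies in some non-algebraic atom; and since $\psi \vdash \varphi$ forces every bad sequence for $\varphi$ to be bad for $\psi$, it suffices to prove that every non-algebraic atom fails to divide over any $A$.

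First I would record the \emph{congruence rigidity} of any $A$-indiscernible sequence $\langle \bar{a}^{i}\rangle_{i<\omega}$: for every $\Zz$-linear term $t$ over $A$ and every $n \geq 2$, $t(\bar{a}^{i}) \equiv t(\bar{a}^{j}) \pmod{n}$ for all $i, j$. Indeed the residue $k \equiv t(\bar{a}^{j}) - t(\bar{a}^{i}) \pmod{n}$ is an invariant of $\tp(\bar{a}^{i}, \bar{a}^{j}/A)$, hence a single constant on pairs with $i < j$, and additivity across the three indices $0,1,2$ gives $2k \equiv k \pmod{n}$, i.e., $k \equiv 0$.

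Now fix a non-algebraic atom $\psi(x, \bar{a})$ and any $A$-indiscernible $\langle \bar{a}^{i}\rangle$ with $\bar{a}^{0} = \bar{a}$. I will show that $\bigwedge_{i < m}\psi(x, \bar{a}^{i})$ has infinitely many solutions for each $m$, so by compactness $\{\psi(x, \bar{a}^{i}) : i < \omega\}$ is consistent. Congruence rigidity makes the congruence literals agree across indices $i$; the inequations forbid only finitely many values of $x$. By observation $\diamondsuit$ from the proof of Lemma~\ref{lem:Elimination of Quantifiers} together with Lemma~\ref{lem:Also composite}, it remains to verify (a) no positive literal $\Prime(\alpha_{k} x + t_{k}(\bar{a}^{i}))$ coincides with a negative literal $\neg\Prime(\alpha'_{l} x + t'_{l}(\bar{a}^{j}))$ as a (coefficient, argument) pair, and (b) Dickson's condition $\varphi_{\bar\alpha}$ for the combined positive list. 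For (b), congruence rigidity gives $t_{k}(\bar{a}^{i}) \equiv t_{k}(\bar{a}^{0}) \pmod{p}$ for each relevant prime $p$, so the combined mod-$p$ avoidance condition reduces to that of $\psi(x, \bar{a}^{0})$, which holds by non-algebraicity. For (a), suppose for contradiction that a clash occurs at some $i \neq j$; WLOG $i < j$, and by indiscernibility the equality holds at every $i' < j'$. Comparing the instances at $(0,1), (0,2), (1,2)$ forces $t_{k}(\bar{a}^{0}) = t_{k}(\bar{a}^{1})$ and $t'_{l}(\bar{a}^{1}) = t'_{l}(\bar{a}^{2})$; then indiscernibility propagates these equalities to all pairs, making both $t_{k}$ and $t'_{l}$ constant on the sequence with a common value $c$. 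But then $\psi(x, \bar{a}) = \psi(x, \bar{a}^{0})$ contains both $\Prime(\alpha_{k} x + c)$ and $\neg\Prime(\alpha_{k} x + c)$, contradicting its consistency.

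Hence non-algebraic atoms, and therefore all non-algebraic formulas, do not divide over $A$; every dividing formula is algebraic, and any formula that forks over $A$ is implied by a finite disjunction of dividing---hence algebraic---formulas and is therefore itself algebraic. This yields the claimed $U$-rank $1$ bound on complete $1$-types. The most delicate point is (a), the passage from a cross-index coincidence of literals to a self-contradiction already present inside $\psi(x, \bar{a})$, which crucially exploits the rigidity of $A$-indiscernible sequences.
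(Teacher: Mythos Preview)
Your overall strategy matches the paper's: reduce to a finite conjunction of $\Prime/\neg\Prime$ literals, then use congruence rigidity of $A$-indiscernible sequences together with $\diamondsuit$ to show the conjunction across the sequence still has infinitely many solutions. Your argument for (b) is correct (non-algebraicity gives the full $\star_{\bar f}$ via Remark~\ref{rem:eliminates infinity}, so the Dickson check passes for all the finitely many relevant primes, regardless of how large the combined list is), and your treatment of (a) is in fact more careful than the paper's one-line ``by indiscernibility'': the three-index comparison forcing $t_k$ and $t'_l$ to be constant with a common value, and hence a contradiction already inside $\psi(x,\bar a^0)$, is exactly the content hidden in that line.

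The one genuine gap is the ``absorption'' of $\Prime_n$. You write $\Prime_n(y)\equiv P_n(y)\wedge\Prime(y/n)$ and then list only $\Prime$, $\neg\Prime$, $P_n$, $\neg P_n$ and inequations among your literals; but $y/n$ is not a term in $L^*$, so $\Prime((\alpha x+t(\bar a))/n)$ is not a quantifier-free literal in $x$ over $\bar a$, and your atom is not of the advertised shape. The paper handles this (Claims~\ref{claim:Reduction to modulo} and~\ref{claim:reduction to pr0}) by passing to a new variable via $x=nx'+k$ and replacing the parameter $t(\bar a)$ by the definable element $(\alpha k+t(\bar a))/n$, working at the level of complete types over a saturated model so that the enlarged parameter tuple is available and still lies on an $A$-indiscernible sequence. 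Your DNF-at-the-formula-level reduction can be repaired the same way: first split the atom further so that it fixes $x\bmod N$ for $N$ the l.c.m.\ of all moduli appearing, substitute $x=Nx'+k$, and enlarge $\bar a$ to include the finitely many quotients $(\alpha k+t(\bar a))/n$; the resulting atom in $x'$ has only $\Prime$, $\neg\Prime$, and inequation literals, is non-algebraic iff the original was, and the enlarged tuples still form an $A$-indiscernible sequence. Once you insert this step, your proof is complete and essentially equivalent to the paper's.
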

\begin{proof}
The proof is similar to that of Lemma \ref{lem:Elimination of Quantifiers}. 

Let $N\supseteq A$ be an $\left|A\right|^{+}$-saturated model. Suppose
that $\varphi$ forks over $A$ but is not algebraic. Extend $\varphi$
to a type $p\left(x\right)\in S\left(N\right)$ which is non-algebraic
over $N$. So $p$ forks over $A$, and hence it divides over $A$.
Hence it divides over $\acl\left(A\right)$ (see e.g., \cite[Proof of Lemma 3.21]{Kachernikov}),
so we may assume that $A=\acl\left(A\right)$. By quantifier elimination
we may assume that $p$ is quantifier free. 

Recalling the notation from the proof of Lemma \ref{lem:Elimination of Quantifiers},
we have the following claim. 
\begin{claim}
\label{claim:Reduction to modulo}It is enough to prove that for every
type $q\left(x\right)\in S\left(N\right)$, if $q_{\Prime}=q\upharpoonright L_{\Prime}^{*}$
divides over $A$, then $q_{\Prime}$ is algebraic. \end{claim}
\begin{proof}
We want to show that $p$ is algebraic, thus getting a contradiction.
Let $\sequence{N_{i}}{i<\omega}$ be an indiscernible sequence over
$A$ starting with $N_{0}=N$ in $\C$, which witnesses that $p$
divides over $A$. 

Let $p_{\equiv}=p\upharpoonright L_{\equiv}^{*}$. 

As $p_{\equiv}|_{A}|\vdash p_{\equiv}\vdash\bigcup\set{p_{\equiv}\left(x,N_{i}\right)}{i<\omega}$,
it follows that $\bigcup\set{p_{\Prime}\left(x,N_{i}\right)}{i<\omega}\cup\Sigma$
is inconsistent for some finite $\Sigma$, which is isolated by a
formula of the form $x\equiv k\modp n$ for some $k<n$. 

Let $c\models p$. Then $c\equiv k\modp n$, and let $d=\left(c-k\right)/n$.
Then $\left[\tp\left(d/N\right)\right]_{\Prime}$ divides over $A$
as witnessed by the same sequence $\sequence{N_{i}}{i<\omega}$ (let
$r=\tp\left(d/N\right)$, then if $d'\models\bigcup\set{r_{\Prime}\left(x,N_{i}\right)}{i<\omega}$
then $nd'+k\models\Sigma\cup\bigcup\set{p_{\Prime}\left(x,N_{i}\right)}{i<\omega}$).
Hence, $\left[\tp\left(d/N\right)\right]_{\Prime}$ is algebraic,
i.e., $d\in N$, but then so is $c$. \end{proof}
\begin{claim}
\label{claim:reduction to pr0}It is enough to prove that for every
type $q\left(x\right)\in S\left(N\right)$, if $q_{\Prime_{0}}=q\upharpoonright L_{\Prime_{0}}^{*}$
divides over $A$, then $q_{\Prime_{0}}$ is algebraic.\end{claim}
\begin{proof}
This is similar to the proof of Claim \ref{claim:Reduction to modulo}. 

By Claim \ref{claim:Reduction to modulo}, it is enough to prove that
for any $q\left(x\right)\in S\left(N\right)$, if $q_{\Prime}$ divides
over $A$ then $q_{\Prime}$ is algebraic. Suppose that $q_{\Prime}$
divides over $A$ and let $\sequence{N_{i}}{i<\omega}$ be as in the
proof of Claim \ref{claim:Reduction to modulo}. There is some finite
set of formulas $\Sigma\left(x,N\right)\subseteq q_{\Prime}\backslash q_{\Prime_{0}}$
such that $\bigcup\set{q_{\Prime_{0}}\left(x,N_{i}\right)\cup\Sigma\left(x,N_{i}\right)}{i<\omega}$
is inconsistent. As in the proof of Lemma \ref{lem:Elimination of Quantifiers},
we may assume that for some $n\in\Nn$, $\Sigma$ consists of formulas
of the form $\Prime_{n}\left(mx+c\right)$ for $c\in N$ and $m\in\Zz$.
Let $d\models q$, and assume that $d\equiv k\modp n$ for $k<n$.
Then for some $e\in\C$, $d=ne+k$, and $\left[\tp\left(e/N\right)\right]_{\Prime_{0}}$
divides over $A$ (let $r=\tp\left(e/N\right)$, then if $e'\models\bigcup\set{r_{\Prime_{0}}\left(x,N_{i}\right)}{i<\omega}$
then $ne'+k\models\bigcup\set{q_{\Prime_{0}}\left(x,N_{i}\right)\cup\Sigma\left(x,N_{i}\right)}{i<\omega}$,
as in the proof of Lemma \ref{lem:Elimination of Quantifiers}). Hence
this type is algebraic and hence so is $q$. \end{proof}
\begin{claim}
\label{claim:finite set of pr formulas}It is enough to prove that
if $\Sigma\left(x\right)$ is a finite set of formulas of the form
$\Prime\left(mx+c\right)$ or $\neg\Prime\left(mx+c\right)$ for $m\in\Zz$
and $c\in N$, which has infinitely many solutions, then $\Sigma$
does not divide over $A$.\end{claim}
\begin{proof}
Use Claim \ref{claim:reduction to pr0}. We have to prove that if
$q_{\Prime_{0}}$ divides over $A$ then it is algebraic. Suppose
it is not, and let $\Sigma\subseteq q_{\Prime_{0}}$ be finite such
that $\Sigma\left(x\right)\cup\set{x\neq c}{c\in N}$ divides over
$A$. Then $\Sigma$ has infinitely many solutions and is of the right
form, so we are done. 
\end{proof}
Let $\Sigma\left(x\right)$ be as in Claim \ref{claim:finite set of pr formulas}. 

Then $\Sigma\left(x,\bar{c},\bar{c}'\right)=\set{\Prime\left(m_{i}x+c_{i}\right)}{i<k}\cup\set{\neg\Prime\left(m_{j}'x+c_{j}'\right)}{j<k'}$,
for $m_{i},m_{j}'\in\Zz$ and $c_{i},c_{j}'\in N$. Now take an indiscernible
sequence $\sequence{\bar{c}_{\alpha}}{\alpha<\omega}$ starting with
$\sequence{c_{i}}{i<k}\concat\sequence{c_{j}'}{j<k'}$ over $A$.
Consider a finite union of the form $\bigcup\set{\Sigma\left(x,\bar{c}_{\alpha},\bar{c}_{\alpha}'\right)}{\alpha<l}$.
Then by indiscernibility it cannot be that $\left(m_{i},c_{i,\alpha}\right)=\left(m_{j}',c_{j,\beta}'\right)$
for some $\alpha,\beta<l$, $i<k$ and $j<k'$. Hence by (D), it is
enough to show that $\star_{\bar{f}}$ holds for $\bar{f}=\sequence{f_{i,\alpha}}{i<k,\alpha<l}$
where $f_{i,\alpha}\left(x\right)=m_{i}x+c_{i,\alpha}$, that is,
we have to show that $\varphi_{\bar{m}}\left(\sequence{\bar{c}_{\alpha}}{\alpha<l}\right)$
holds (see Remark \ref{rem:first order}). 

We have to check that if $r$ is a prime, smaller than some natural
number which depends only on $\bar{m}$, $k$ and $l$, (so in particular
a standard prime number), for some $0\leq t<r$, for all $i<k$ and
$\alpha<l$, $m_{i}t+c_{i,\alpha}\not\equiv0\modp r$. If this does
not happen for $r$, then, as $c_{i,\alpha}\equiv c_{i}\modp r$,
we get that for all $0\leq t<r$, for some $i<k$, $m_{i}t+c_{i}\equiv0\modp r$.
But this means that $\Sigma$ cannot have infinitely many solutions
by Remark \ref{rem:eliminates infinity} --- contradiction.
\end{proof}
We move to NIP. We will show that $\TheoryofPrimes$ has the independence
property IP (and thus the theory is not NIP), and even the $n$-independence
property. This shows in particular that $\TheoryofPrimes$ is unstable.
We will recall the definition in the proof of Theorem \ref{thm:IP},
but the interested reader may find more in \cite{pierrebook} (about
NIP) and \cite{Chernikov2014} (on $n$-dependence). 

We will use the following proposition. 
\begin{prop}
\label{prop:arithmatic progressions}For all $n<\omega$ and $s\subseteq n$
there is an arithmetic progression $\sequence{at+b}{t<n}$ of natural
numbers such that $at+b$ is prime iff $t\in s$. \end{prop}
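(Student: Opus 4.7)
The plan is to apply Lemma~\ref{lem:Also composite} directly, encoding $s$ and its complement via parallel linear forms. Given $n<\omega$ and $s\subseteq n$, I would set $f_t(x)=x+at$ for $t\in s$ and $g_{t'}(x)=x+at'$ for $t'\in n\setminus s$, thinking of the sought starting point $b$ of the arithmetic progression as the unknown $m$ produced by the lemma. Any $b$ supplied by the lemma then makes $\langle at+b : t<n\rangle$ an arithmetic progression whose $t$-th term is $f_t(b)$ (hence prime) precisely for $t\in s$.

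The only real choice is the common difference $a$, which must be tuned so that the hypotheses of Lemma~\ref{lem:Also composite} hold. The distinctness hypothesis $(a_i,b_i)\neq(c_j,d_j)$ reduces here to $(1,at)\neq(1,at')$ for $t\in s$, $t'\in n\setminus s$, which is automatic as soon as $a\geq 1$. The substantive hypothesis is $\star_{\bar f}$ for $\bar f=\langle f_t : t\in s\rangle$. Since all leading coefficients equal $1$, Remark~\ref{rem:bounding the integer} gives $N=|s|+1$, so $\star_{\bar f}$ reduces to showing that for every prime $p\leq|s|$ there is some $x\in\Zz$ with $\prod_{t\in s}(x+at)\not\equiv 0\pmod p$. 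I would therefore take $a=n!$ (or any multiple of the product of all primes $\leq n$). For any such prime $p\leq n$ one has $at\equiv 0\pmod p$ for every $t$, hence $f_t(x)\equiv x\pmod p$ for all $t\in s$, and picking $x=1$ yields $\prod_{t\in s}f_t(1)\equiv 1\not\equiv 0\pmod p$, verifying $\star_{\bar f}$.

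With both hypotheses in hand, Lemma~\ref{lem:Also composite} produces infinitely many natural numbers $b$ such that $b+at$ is prime for every $t\in s$ and composite for every $t\in n\setminus s$; any such $b$ yields the desired progression. The only real obstacle is this choice of $a$: it has to annihilate modulo every small prime which could otherwise force $\prod_{t\in s}(x+at)$ to vanish identically, and a single highly divisible $a$ handles all these primes uniformly. The degenerate case $s=\emptyset$, where Lemma~\ref{lem:Also composite} is applied with $k=0$, is either covered directly by the induction-on-$k'$ in that lemma or handled by hand taking $a=b=4$, so that every term of the progression equals $4(t+1)$ and is composite.
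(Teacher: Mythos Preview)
Your argument is correct and follows essentially the same strategy as the paper: both apply Lemma~\ref{lem:Also composite} to the $n$ linear forms indexed by $t<n$, choosing one parameter to be highly divisible so that $\star_{\bar f}$ becomes trivial (all forms are $\equiv 1\pmod p$ at a single test value). The only difference is a harmless duality---the paper fixes the constant term $b=n!+1$ and uses the maps $tx+b$, letting the lemma produce the common difference, whereas you fix the common difference $a=n!$ and use the maps $x+at$, letting the lemma produce the starting point.
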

\begin{proof}
As we said in the introduction, according to a private communication
with Tamar Ziegler, this follows from the proof of the Green-Tao theorem
about arithmetic progression of primes \cite{GreenTao}. 

We give a very detail-free explanation of why this should be true.
Heuristically, the primes below $N$ behave like a random set of density
$1/\log N$, so the number of $x,d\leq N$ such that $x+d$, $x+2d,\ldots,x+kd$
are all primes is $N^{2}/\left(\log N\right)^{k}$. If we skip the
$i$'th element in the sequence (i.e., we do not ask it to be prime),
then the number is $N^{2}/\left(\log N\right)^{k-1}$. Hence, we may
remove all the prime arithmetic progressions and still find some sequence
where $i$'th element is not prime. 

We will however give a proof that relies on (D). Fix $n$ and $s$.
Let $b=n!+1$. Use Lemma \ref{lem:Also composite}, with the linear
maps $x+b,2x+b,\ldots,nx+b$. By Remark \ref{rem:bounding the integer},
it is enough to check that for all primes $p\leq n$, for some $t<p$,
$kt+b\not\equiv0\modp p$ for all $1\leq k\leq n$. But $b\equiv1\modp p$
so this holds for $t=0$. \end{proof}
\begin{thm}
\label{thm:IP}(Without assuming Dickson's conjecture) $\TheoryofPrimes$
has the independence property and even the $n$-independence property.
Hence so does $\TheoryofPrimes^{*}$. \end{thm}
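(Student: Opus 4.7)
The plan is to exhibit, for each $n$, a formula with the $n$-independence property in the language $L=\{+,0,1,\Prime\}$; since $L\subseteq L^{*}$ this transfers to $\TheoryofPrimes^{*}$. Take
\[
\varphi(x;y_{1},\ldots,y_{n})=\Prime(x+y_{1}+y_{2}+\cdots+y_{n}).
\]
Given $N$, I will produce parameter sequences $(a_{i}^{k})_{i<N}$ for $k=1,\ldots,n$ and a realizer $b_{s}$ for every $s\subseteq\{0,\ldots,N-1\}^{n}$ satisfying
\[
\Prime\bigl(b_{s}+a_{i_{1}}^{1}+\cdots+a_{i_{n}}^{n}\bigr)\Longleftrightarrow(i_{1},\ldots,i_{n})\in s.
\]

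The choice is $M:=(N^{n})!$ and $a_{i}^{k}:=i\cdot M^{k}$. Two features of the sums $\sigma_{\bar{\imath}}:=\sum_{k}i_{k}M^{k}$ drive the argument: they are pairwise distinct as $\bar{\imath}$ ranges over $\{0,\ldots,N-1\}^{n}$ (by base-$M$ expansion, since $N\leq M$), and every $\sigma_{\bar{\imath}}$ is divisible by every prime $p\leq N^{n}$. For a fixed $s$, apply Lemma \ref{lem:Also composite} to the linear maps $\{f_{\bar{\imath}}(x)=x+\sigma_{\bar{\imath}}:\bar{\imath}\in s\}$ (required prime) and $\{g_{\bar{\jmath}}(x)=x+\sigma_{\bar{\jmath}}:\bar{\jmath}\notin s\}$ (required composite). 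Distinctness of the $\sigma$'s supplies the hypothesis $(a_{i},b_{i})\neq(c_{j},d_{j})$ of the lemma, and $\star_{\bar{f}}$ follows via Remark \ref{rem:bounding the integer}: for any prime $p\leq|s|\leq N^{n}$, $p\mid M$ forces $\sigma_{\bar{\imath}}\equiv0\modp{p}$, so the choice $x_{0}=1$ satisfies $\prod_{\bar{\imath}\in s}(x_{0}+\sigma_{\bar{\imath}})\equiv 1\not\equiv 0\modp{p}$. The lemma then produces infinitely many $b_{s}$ with the prescribed primality pattern, witnessing $n$-IP for $\varphi$. In particular taking $n=1$ recovers IP.

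The main subtlety concerns the parenthetical ``without assuming Dickson's conjecture'': Lemma \ref{lem:Also composite} itself uses (D). To secure the unconditional version of the statement one replaces this last invocation by the multi-dimensional analogue of Prop \ref{prop:arithmatic progressions} --- that is, the grid-pattern refinement supplied unconditionally by the Green-Tao / Tao-Ziegler framework indicated in the introduction. The combinatorial content above (distinctness of the $\sigma_{\bar\imath}$'s and the verification of $\star$ from the divisibility of $M$ by all primes $p\leq N^{n}$) is unchanged; only the tool producing $b_{s}$ is swapped.
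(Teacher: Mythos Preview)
Your argument under (D) is correct and uses the same witnessing formula $\varphi(x;y_1,\ldots,y_n)=\Prime(x+y_1+\cdots+y_n)$ as the paper, but the construction of the parameters is organised differently, and this difference matters for the unconditional claim.

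In your approach you fix the $a_i^k=iM^k$ in advance and then, for each pattern $s$, appeal to Lemma~\ref{lem:Also composite} (or, you suggest, to some ``multi-dimensional analogue'' of Proposition~\ref{prop:arithmatic progressions}) to locate $b_s$ realising the prescribed primality at the shifts $\{\sigma_{\bar\imath}\}$. Under (D) this is fine, and your verification of $\star_{\bar f}$ via $p\mid M$ is clean. But for the unconditional statement your last paragraph is the weak point: the shifts $\sigma_{\bar\imath}=\sum_k i_k M^k$ do \emph{not} form an arithmetic progression, and Proposition~\ref{prop:arithmatic progressions} only produces an arithmetic progression of some length and some difference $d$ chosen by the theorem, not by you. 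So ``replace Lemma~\ref{lem:Also composite} by Proposition~\ref{prop:arithmatic progressions}'' does not go through as written; you would genuinely need a constellation-type result (primes hitting a prescribed finite set of translates with prescribed composite complements), which the paper neither states nor cites.

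The paper sidesteps this entirely. It applies Proposition~\ref{prop:arithmatic progressions} \emph{once}, to a single arithmetic progression of length $k^{n}\cdot 2^{k^{n}}$ whose primality pattern encodes all subsets $s\subseteq k^{n}$ in consecutive blocks of length $k^{n}$. Only \emph{after} this progression (with its difference $d$) has been found are the parameters defined: $a_{i,j}=j d\, k^{i}$ and $b_s=c_{s,0}$. Thus both the $a$'s and all the $b_s$'s live inside the one progression, and only the one-dimensional Proposition~\ref{prop:arithmatic progressions} (unconditional via Green--Tao) is needed. That is the idea you are missing for the ``without (D)'' part.
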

\begin{proof}
We use only Proposition \ref{prop:arithmatic progressions}. To say
that $T$ is $n$-independent, we have to find a formula $\varphi\left(x,y_{1},\ldots,y_{n}\right)$
such that for all $k<\omega$, there are tuples $a_{i,j}$ for $i<n,j<k$
inside some model $M\models T$ such that for every subset $s\subseteq k^{n}$,
there is some $b_{s}\in M$ with $M\models\varphi\left(b_{s},a_{0,j_{0}},\ldots a_{n-1,j_{n-1}}\right)$
iff $\left(j_{0},\ldots,j_{n-1}\right)\in s$. This of course implies
the independent property. 

The formula we take is $\varphi\left(x,y_{1},\ldots,y_{n}\right)=\Prime\left(x+y_{1}+\cdots+y_{n}\right)$,
and we work in $\Zz$.

Given $k$, by Proposition \ref{prop:arithmatic progressions} there
is an arithmetic progression of length $k^{n}\cdot2^{\left(k^{n}\right)}$,
which we write as $\sequence{\bar{c}_{s}}{s\subseteq k^{n}}$ where
$\bar{c}_{s}=\sequence{c_{s,l}}{l<k^{n}}$, such that for each subset
$s\subseteq k^{n}$ and $l<k^{n}$, $\Prime\left(c_{s,l}\right)$
iff $\left(j_{0},\ldots,j_{n-1}\right)\in s$ where $j_{i}<k$ are
(unique) such that $l=\sum_{i<n}j_{i}k^{i}$. 

Suppose this progression has difference $d>0$. Now we choose $a_{i,j}$
for $i<n,j<k$ and $b_{s}$ for $s\subseteq k^{n}$ as follows.

Let $a_{0,j}=j\cdot d$ for $j<k$ and in general, for $i<n$, $a_{i,j}=jd\cdot k^{i}$.
Let $b_{s}=c_{s,0}$. 

Now note that 
\[
c_{s,0}+\sum_{i<n}\left(j_{i}d\right)k^{i}=c_{s,\sum_{i<n}j_{i}\cdot k^{i}}.
\]

And so we are done. 
\end{proof}
\bibliographystyle{alpha}
\bibliography{common2}

\end{document}